\newtheorem{theorem}{Theorem}[section]
\newtheorem{lemma}[theorem]{Lemma}
\newtheorem{proposition}[theorem]{Proposition}
\theoremstyle{definition}
\newtheorem{definition}[theorem]{Definition}
\newtheorem{example}[theorem]{Example}
\theoremstyle{remark}
\newtheorem{remark}[theorem]{Remark}
\numberwithin{equation}{section}
\begin{document}

\title{Some dynamical properties for linear operators}
\author{Bingzhe Hou }
\address{Bingzhe Hou, Department of Mathematics , Jilin university, 130012, Changchun, P.R.China} \email{houbz@jlu.edu.cn}

\author{Geng Tian}
\address{Geng Tian, Department of Mathematics , Jilin university, 130012, Changchun, P.R.China} \email{tian\_geng@yahoo.com.cn}

\author{Luoyi Shi}
\address{Luoyi Shi, Department of Mathematics , Jilin university, 130012, Changchun, P.R.China} \email{shiluoyi811224@sina.com}

\date{Mar. 26, 2009}
\subjclass[2000]{Primary 47B37, 47B99; Secondary 54H20, 37B99}
\keywords{norm-unimodality, distributional chaos, Li-Yorke chaos,
similarity, spectra, normal operator, compact operator.}
\thanks{The first author
is supported by the Youth Foundation of Department of Mathematics,
Jilin university.}
\begin{abstract}
In our another recent article, we introduce a new dynamical property
for linear operators called norm-unimodality which implies
distributional chaos.  In the present paper, we'll give a further
discussion of norm-unimodality. It is showed that norm-unimodality
is similar invariant and the spectra of norm-unimodal operator is
referred to. As an application, in each nest algebra there exist
distributional chaotic operators. Moreover, normal operators and
compact operators with regard to norm-unimodality and Li-Yorke chaos
are also be considered. Specially,  a small compact perturbation of
the unit operator could be distributionally chaotic.
\end{abstract}
\maketitle

\section{Introduction and Preliminaries}

A discrete dynamical system is simply a continuous mapping $f:
X\rightarrow X$ where $X$ is a complete separable metric space. For
$x\in X$, the orbit of $x$ under $f$ is
$Orb(f,x)=\{x,f(x),f^{2}(x),\ldots\}$ where $f^{n}= f\circ f\circ
\cdots \circ f $ is the $n^{th}$ iterate of $f$ obtained by
composing $f$ with $n$ times.

In 1975, Li and Yorke \cite{L-Y} observed complicated dynamical
behavior for the class of interval maps with period 3. This
phenomena is currently known under the name of Li-Yorke chaos.
Therefrom, several kinds of chaos were well studied. In the present
article, we focus on distributional chaos.

\begin{definition}
$\{x,y\}\subset X$ is said to be a Li-Yorke chaotic pair, if
\begin{equation*}
\limsup\limits_{n\rightarrow\infty}d(f^{n}(x),f^{n}(y))>0, \quad
\liminf\limits_{n\rightarrow\infty}d(f^{n}(x),f^{n}(y))=0.
\end{equation*}
Furthermore, $f$ is called Li-Yorke chaotic, if there exists an
uncountable subset $\Gamma\subseteq X$ such that each pair of two
distinct points in $\Gamma$ is a Li-Yorke chaotic pair.
\end{definition}

From Schweizer and Sm\'{\i}tal's paper \cite{S-S}, distributional
chaos is defined in the following way.

For any pair $\{x,y\}\subset X$ and any $n\in \mathbb{N}$, define
distributional function $F^{n}_{xy}:\mathbb{R}\rightarrow [0,1]$:
\begin{equation*}
F^{n}_{xy}(\tau)=\frac{1}{n}\#\{0\leq i\leq n-1:
d(f^{i}(x),f^{i}(y))<\tau\}.
\end{equation*}
Furthermore, define
\begin{align*}
F_{xy}(\tau)=\liminf\limits_{n\rightarrow\infty}F^{n}_{xy}(\tau), \\
F_{xy}^{*}(\tau)=\limsup\limits_{n\rightarrow\infty}F^{n}_{xy}(\tau)
\end{align*}
Both $F_{xy}$ and $F_{xy}^{*}$ are nondecreasing functions and may
be viewed as cumulative probability distributional functions
satisfying $F_{xy}(\tau)=F_{xy}^{*}(\tau)=0$ for $\tau<0$.

\begin{definition}
$\{x,y\}\subset X$ is said to be a distributionally chaotic pair, if
$$
F_{xy}^{*}(\tau)\equiv 1, \ \ \forall \ \ \tau>0 \quad and \quad
F_{xy}(\epsilon)=0, \ \ \exists \ \ \epsilon>0.
$$
Furthermore, $f$ is called distributionally chaotic, if there exists
an uncountable subset $\Lambda\subseteq X$ such that each pair of
two distinct points in $\Lambda$ is a distributionally chaotic pair.
Moreover, $\Lambda$ is called a distributionally
$\epsilon$-scrambled set.
\end{definition}

Distributional chaos always implies Li-Yorke chaos, as it requires
more complicated statistical dependence between orbits than the
existence of points which are proximal but not asymptotic. The
converse implication is not true in general. However in practice,
even in the simple case of Li-Yorke chaos, it might be quite
difficult to prove chaotic behavior from the very definition. Such
attempts have been made in the context of linear operators (see
\cite{Duan, Fu}). Further results of \cite{Duan} were extended in
\cite{Opr} to distributional chaos for the annihilation operator of
a quantum harmonic oscillator. More about distributional chaos, one
can see \cite{Smital1, Smital2, Liao1, Liao2, Wang}.

We are interested in the dynamical systems induced by continuous
linear operators on Banach spaces. From Rolewicz's article
\cite{Rolewicz}, hypercyclicity is widely studied. In fact, it
coincides a dynamical property "transitivity". Now there has been
got so many improvements at this aspect (Grosse-Erdmann's and
Shapiro's articles \cite{Grosse,Shapiro} are good surveys.).
Specially, distributional chaos for shift operators were discussed
by F. Mart\'{\i}nez-Gim\'{e}nez, et.al. in \cite{Gim}. In a recent
article \cite{Hbz} of the first author, one introduce a new
dynamical property for linear operators called norm-unimodality
which implies distributional chaos.

\begin{definition}
Let $X$ be a Banach space and let $T\in \mathcal {L}(X)$. $T$ is
called norm-unimodal, if we have a constant $r
>1$ such that for any $m\in\mathbb{N}$, there exists $x_m\in
X$ satisfying
$$
\lim\limits_{k\rightarrow\infty}\|T^kx_m\|=0, \ \ and \ \ \| T^ix_m
\|\geq r^i\|x_m\|, \ \ i=1,2,\ldots,m.
$$
Furthermore, such $r$ is said to be a norm-unimodal constant for the
norm-unimodal operator $T$.
\end{definition}

\begin{theorem}[Distributionally Chaotic Criterion \cite{Hbz}]\label{D-C-C}
Let $X$ be a Banach space and let $T\in \mathcal {L}(X)$. If $T$ is
norm-unimodal, then $T$ is distributionally chaotic.
\end{theorem}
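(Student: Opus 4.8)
The strategy is to construct, for a norm-unimodal operator $T$ with norm-unimodal constant $r>1$, an uncountable distributionally $\epsilon$-scrambled set $\Lambda$ by carefully mixing the vectors $x_m$ supplied by the definition. The two competing requirements are: (i) along a set of times of density one, the orbits of two distinct points must come close together (so that $F_{xy}^{*}(\tau)\equiv 1$ for all $\tau>0$), and (ii) along another infinite set of times the orbits must stay uniformly apart (so that $F_{xy}(\epsilon)=0$ for some $\epsilon>0$). The norm-unimodality hypothesis is tailored to this: the condition $\|T^i x_m\|\geq r^i\|x_m\|$ for $i=1,\dots,m$ forces a long initial segment of exponential growth, while $\lim_{k\to\infty}\|T^k x_m\|=0$ guarantees that each orbit eventually returns to (and stays near) $0$.

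\smallskip

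First I would normalize and rescale: replacing $x_m$ by a suitable scalar multiple, arrange that $\|x_m\|$ is extremely small, yet $\|T^i x_m\|$ still reaches a macroscopic size (say $\geq 1$) for $i$ in some middle range of $\{1,\dots,m\}$, and then decays back below $2^{-m}$ after some time $N_m$. Concretely, pick a rapidly increasing sequence of integers $m_1 < m_2 < \cdots$ and corresponding times $0 = n_0 < n_1 < n_2 < \cdots$ so that the "active window" of the rescaled $x_{m_j}$ sits inside the block $[n_{j-1}, n_j)$: before $n_{j-1}$ the vector and its iterates are negligible, during a sub-block of $[n_{j-1},n_j)$ the norm $\|T^i x_{m_j}\|$ exceeds $1$, and by $n_j$ it has dropped below, say, $4^{-j}$ and remains so. The blocks should be chosen with $n_{j-1}/n_j \to 0$ so that the "bad" times (where orbits are far apart) have upper density $0$ as a proportion of $[0,n_j)$ — this is what will give $F_{xy}^{*}(\tau)=1$ — while still forming an infinite set, which is what gives $F_{xy}(\epsilon)=0$.

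\smallskip

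Next, to get an \emph{uncountable} scrambled set rather than a single pair, I would index points by infinite binary (or $\{0,1\}$-valued) sequences: for $\alpha \in \{0,1\}^{\mathbb{N}}$ set
$$
z_\alpha \;=\; \sum_{j=1}^{\infty} \alpha(j)\, x_{m_j},
$$
where the rescaling makes the series norm-convergent in $X$. Given two distinct sequences $\alpha \neq \beta$, they differ at infinitely many coordinates or at least one; at each coordinate $j$ where $\alpha(j)\neq\beta(j)$, the block $[n_{j-1},n_j)$ contributes times at which $\|T^i(z_\alpha - z_\beta)\|$ is bounded below by a fixed constant (using that the contributions of the other summands are negligible on that block), forcing $F_{z_\alpha z_\beta}(\epsilon)=0$ for a uniform $\epsilon>0$. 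Meanwhile, outside the union of the "active windows" — a set of density one by the block choice — all iterates $T^i(z_\alpha-z_\beta)$ are small, which yields $F_{z_\alpha z_\beta}^{*}(\tau)=1$ for every $\tau>0$. To ensure the $z_\alpha$ are genuinely distinct and the lower bounds survive, one wants the $x_{m_j}$ to be "independent enough"; if they are not automatically so, one replaces the single sequence $(x_m)$ by one adapted to a fixed $\alpha$-dependent selection, or argues via the dominant-block estimate that $z_\alpha\neq z_\beta \Rightarrow$ their difference has a non-negligible component in some block.

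\smallskip

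The main obstacle I anticipate is precisely this bookkeeping of \emph{interference between blocks}: when estimating $\|T^i z_\alpha\|$ at a time $i$ in the active window of block $j$, one must control the tails $\sum_{\ell>j}$ (handled by norm-convergence and aggressive rescaling, making these summands tiny in norm hence tiny after applying the bounded operator $T^i$ — but note $\|T^i\|$ may grow, so the rescaling of $x_{m_\ell}$ must beat $\|T^i\|$ for all relevant $i$, i.e. for $i < n_\ell$, which forces the rescaling factors to be chosen recursively) and the earlier summands $\sum_{\ell<j}$ (handled by the decay $\lim_k \|T^k x_{m_\ell}\| = 0$, which lets us push $n_{\ell}$ large enough that these are negligible throughout $[n_{\ell},\infty)$). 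Balancing these two one-sided constraints — tails need big rescaling, heads need big gaps — is the crux; once the sequences $(m_j)$, $(n_j)$, and the rescaling factors are fixed in the right recursive order, the verification of the two distributional conditions is routine. I would therefore expect the proof to open with a lemma extracting a "normalized" family of vectors with clean quantitative decay/growth estimates, and then the construction of $\Lambda = \{z_\alpha\}$ proceeds as above.
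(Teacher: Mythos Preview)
The paper does not prove this theorem; it is quoted from the companion article \cite{Hbz} and used as a black box, so there is no proof here to compare your sketch against.

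That said, your overall architecture---Cantor-type sums $z_\alpha=\sum_j\alpha(j)\,c_j x_{m_j}$ with recursively staggered blocks and rescalings---is the standard route to such results, and the inter-block interference you flag is indeed the technical heart. There is, however, a genuine conceptual slip. You write that the far-apart times ``still forming an infinite set'' is ``what gives $F_{xy}(\epsilon)=0$.'' That is the Li--Yorke requirement, not the distributional one. By definition
\[
F_{xy}(\epsilon)=\liminf_{n\to\infty}\tfrac{1}{n}\,\#\{0\le i<n:\|T^i x-T^i y\|<\epsilon\},
\]
and for this to vanish you need a subsequence $N_k$ along which the \emph{far-apart} times occupy a fraction of $[0,N_k)$ tending to~$1$; an infinite set of density zero does nothing. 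Concretely, two competing density conditions are needed: at the end of the growth phase of block~$j$ (say time $n_{j-1}+m_j$) the far-apart times must dominate $[0,n_{j-1}+m_j)$, forcing $n_{j-1}/m_j\to 0$; at the end of the decay phase (time $n_j$) the close times must dominate $[0,n_j)$, forcing $(n_{j-1}+m_j)/n_j\to 0$. Your single hypothesis $n_{j-1}/n_j\to 0$ covers only the second, and your phrasing suggests you did not see that the first is required at all. (The norm-unimodal growth $\|T^i x_m\|\ge r^i\|x_m\|$ for $1\le i\le m$ is exactly what makes the first achievable: normalize so that $c_j\|x_{m_j}\|=r^{-\lfloor\sqrt{m_j}\rfloor}$, and the iterates exceed~$1$ on all but a $O(1/\sqrt{m_j})$ fraction of $[0,m_j]$.)

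A related loose end: two distinct $\alpha,\beta\in\{0,1\}^{\mathbb N}$ may differ at only finitely many coordinates, in which case the far-apart times are finite and $F_{z_\alpha z_\beta}(\epsilon)>0$ trivially. You must restrict to an uncountable subset of $\{0,1\}^{\mathbb N}$ in which any two elements differ infinitely often (e.g.\ characteristic sequences of an almost-disjoint family of subsets of $\mathbb N$), and then run the density argument above along those infinitely many differing blocks.
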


More generally,

\begin{theorem}[Weakly Distributionally Chaotic Criterion \cite{Hbz}]\label{W-D-C-C}
Let $X$ be a Banach space and let $T\in \mathcal {L}(X)$. Suppose
$C_m$ be a sequence of positive numbers  increasing to $+\infty$. If
there exist $\{x_m\}_{m=1}^{\infty}$ in $X$ satisfying

$(WNU1) \ \ \  \ \ \lim\limits_{k\rightarrow\infty}\|T^kx_m\|=0$.

$(WNU2)$  \ \ There is a sequence of positive integers $N_m$
increasing to $+\infty$, such that
$\lim\limits_{m\rightarrow\infty}\frac{\# \{0\leq i\leq N_m-1; \|
T^ix_m \|\geq C_m\|x_m\| \} }{N_m}=1$.

Then $T$ is distributionally chaotic.

\end{theorem}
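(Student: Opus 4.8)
The plan is to build, out of the vectors $x_m$, a single vector whose orbit is statistically forced both arbitrarily close to and arbitrarily far from the orbit of $0$, and then to linearise. It suffices to produce $x_0\in X$ and two sequences of windows $[0,P_l)$ and $[0,Q_l)$ so that the relative frequency of $\{\,i<P_l:\|T^ix_0\|<\delta\,\}$ tends to $1$ for every $\delta>0$, while the relative frequency of $\{\,i<Q_l:\|T^ix_0\|\ge K\,\}$ tends to $1$ for every $K>0$. Indeed, for distinct scalars $\lambda\neq\mu$ one has $\|T^i(\lambda x_0-\mu x_0)\|=|\lambda-\mu|\,\|T^ix_0\|$, so the windows $[0,P_l)$ give $F^{*}_{\lambda x_0,\mu x_0}(\tau)=1$ for all $\tau>0$ and the windows $[0,Q_l)$ give $F_{\lambda x_0,\mu x_0}(\epsilon)=0$ for every $\epsilon>0$; hence the one-dimensional subspace spanned by $x_0$ is an uncountable distributionally $\epsilon$-scrambled set for each $\epsilon>0$, which is exactly what the conclusion asks for.

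To construct $x_0$, normalise $\|x_m\|=1$ (both hypotheses are homogeneous) and look for $x_0=\sum_{k\ge1}\beta_k x_{m_k}$ with $m_1<m_2<\cdots$, $\beta_k\downarrow0$, $\sum_k\beta_k<\infty$, the $m_k$ and $\beta_k$ chosen by an alternating recursion. Having fixed $m_1,\dots,m_k$ and $\beta_1,\dots,\beta_k$, one knows by (WNU1) a time past which $\|T^ix_{m_j}\|$ ($j\le k$) is as small as desired, and the finite quantities $M_j=\sup_i\|T^ix_{m_j}\|$; then (a) pick $\beta_{k+1}$ so small that $\sum_{j>k}\beta_j\|T^ix_{m_j}\|$ is negligible on every bounded range of $i$ that will matter and that block $k+1$ ``costs more than $\delta$'' only very far out; and (b) using $C_m\to\infty$ and $N_m\to\infty$, pick $m_{k+1}$ so that $\beta_{k+1}C_{m_{k+1}}$ is large, $C_{m_{k+1}}$ is very large (bigger than $\|T\|^{N_{m_k}}$, say), $N_{m_{k+1}}$ exceeds everything chosen so far, and the good set $G_{k+1}=\{\,i<N_{m_{k+1}}:\|T^ix_{m_{k+1}}\|\ge C_{m_{k+1}}\,\}$ has density $>1-2^{-(k+1)}$ in $[0,N_{m_{k+1}})$. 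The windows $[0,Q_k)=[0,N_{m_k})$ then work: for $i$ in the part of $G_k$ lying past the fixed, finite instant by which blocks $1,\dots,k-1$ have decayed to insignificance, block $k$ dominates the whole series — the earlier blocks are asleep and the later ones are crushed by the $\beta_j$'s together with $C_{m_j}\ge C_{m_{k+1}}>\|T\|^{N_{m_k}}\ge\|T\|^i$ — so $\|T^ix_0\|\ge\beta_kC_{m_k}-O(1)\to\infty$ on a set of density $\to1$ in $[0,N_{m_k})$. For the windows $[0,P_l)$ one takes initial segments ending just before block $k+1$ first contributes as much as $\delta$; there every block has been made small, and the density of times with $\|T^ix_0\|<\delta$ tends to $1$.

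The step I expect to be the genuine obstacle is precisely this ``close'' estimate, together with the verification that a single recursion can satisfy all the conflicting constraints. The delicate point is that (WNU1) supplies only $\|T^ix_{m_k}\|\to0$ and not eventual vanishing, so each already-decayed block still contributes a positive amount to $\|T^ix_0\|$: one must place the stage-$k$ valley far enough to the right that every earlier block has been driven below $\delta\,2^{-j}\beta_j^{-1}$ — legitimate for each fixed $\delta$ once $k$ is large, because $m_k$ (hence $N_{m_k}$, hence the right-hand end of the valley, which is tied to the tiny $\beta_{k+1}$) may be taken to grow as fast as needed — while at the same time keeping block $k+1$ and its successors below $\delta$ throughout the valley, which is what forces $C_{m_{k+1}}$ to be huge and $\beta_{k+1}$ minute. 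Getting ``$\beta_{k+1}C_{m_{k+1}}$ large'' to coexist with ``$\beta_{k+1}$ tiny'', ``long valleys'' with ``enormous $N_{m_k}$'', and fixing the order of the choices ($\beta_{k+1}$ before $m_{k+1}$, and $m_{k+1}$ before $\beta_{k+2}$), is the technical heart; once the recursion is in place, the two frequency computations and the passage to the span of $x_0$ are routine.
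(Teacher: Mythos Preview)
The paper does not contain a proof of this statement: Theorem~\ref{W-D-C-C} is quoted from \cite{Hbz} and is merely \emph{applied} (in the final proposition on $I+K_\epsilon$), so there is no in-paper argument against which to compare your attempt.

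That said, your outline is the standard ``gliding hump'' construction one expects for such a criterion, and the strategy is sound: build a single vector $x_0=\sum_k\beta_k x_{m_k}$ whose orbit alternates between long valleys (density of small norms tending to~$1$) and long peaks (density of large norms tending to~$1$), then linearise via $\Lambda=\{\lambda x_0:\lambda\in\mathbb{C}\}$. Two small remarks on the write-up. First, in your ``far'' estimate the phrase ``crushed by the $\beta_j$'s together with $C_{m_j}\ge C_{m_{k+1}}>\|T\|^{N_{m_k}}$'' is slightly misleading: the inequality $C_{m_{k+1}}>\|T\|^{N_{m_k}}$ is not what controls the tail blocks on $[0,N_{m_k})$; what does the work is that $\beta_{k+1}$ is chosen \emph{after} $N_{m_k}$, so one may simply impose $\beta_{k+1}\|T\|^{N_{m_k}}<2^{-(k+1)}$ (and enforce $\beta_{j+1}\le\beta_j/2$ to sum the later terms). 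Second, for the ``close'' windows it is cleaner to make the stage-$k$ valley work for the specific threshold $\delta=1/k$ rather than for ``each fixed $\delta$ once $k$ is large''; this removes any ambiguity about the order of quantifiers and yields, for every $\delta>0$, the tail of your sequence $P_l$ as witnessing windows. With these adjustments the recursion closes and the two density computations are routine, exactly as you say.
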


In the present paper,  we'll show that norm-unimodality is similar
invariant firstly, and give a description for the spectra of
norm-unimodal operators. As an application of Theorem \ref{D-C-C},
in each nest algebra there exist distributional chaotic operators.
At the end, normal operators and compact operators are referred to.
It is proven that neither normal operator nor compact operator is
Li-Yorke chaotic, and any compact perturbation of $\lambda I$ can't
be norm-unimodal. Surprisingly, a small compact perturbation of the
unit operator could be distributionally chaotic.

\section{Norm-unimodal operators}

\begin{proposition}
Let $X$ be a Banach space. let $T , C \in \mathcal {L}(X)$ and $C$
be an invertible operator. If $T$ is  norm-unimodal, then $C^{-1}TC$
is also norm-unimodal.
\end{proposition}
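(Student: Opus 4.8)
The plan is to build witnesses for the norm-unimodality of $S:=C^{-1}TC$ out of those for $T$. The tempting choice $y_m:=C^{-1}x_m$ does not work directly: since $S^iy=C^{-1}T^iCy$, the $S$-orbit of $C^{-1}v$ is precisely the $C^{-1}$-image of the $T$-orbit of $v$, and conjugation distorts norms by the factor $\kappa:=\|C\|\,\|C^{-1}\|\ge 1$, which may well exceed $r$ and thereby kill the required expansion $\|S^iy_m\|\ge r'^{\,i}\|y_m\|$ over the first few iterates. The remedy is to keep $y_m$ of the form $C^{-1}T^{\ell}x_M$ for a large $M$ and a carefully chosen $\ell$, in effect discarding the initial stretch of the $T$-orbit on which the distortion is harmful.

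More precisely, I would fix once and for all a constant $r'$ with $1<r'<r$, set $j_0:=(\ln\kappa)/\ln(r/r')$, and, for a given $m$, choose $M:=m+\lceil j_0\rceil$ together with a (nonzero) vector $x_M$ provided by the norm-unimodality of $T$ at level $M$. Put $a_j:=\|C^{-1}T^jx_M\|$. Three estimates are immediate: $a_j\ge r^j\|x_M\|/\|C\|$ for $0\le j\le M$; $a_0\le\|C^{-1}\|\,\|x_M\|$; and $a_j\to 0$ as $j\to\infty$, the last because $a_j\le\|C^{-1}\|\,\|T^jx_M\|$.

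The crux is to let $\ell_\ast\in\{0,1,\dots,M\}$ minimize $j\mapsto a_j/r'^{\,j}$. Minimality immediately gives $a_{\ell_\ast+i}\ge r'^{\,i}a_{\ell_\ast}$ for every $i$ with $\ell_\ast+i\le M$. On the other hand $a_j/r'^{\,j}\ge(r/r')^{j}\|x_M\|/\|C\|$ is strictly increasing in $j$ and, since $(r/r')^{j_0}=\kappa$, already exceeds $a_0\,(=a_0/r'^{\,0})$ as soon as $j>j_0$; hence $\ell_\ast\le j_0$, so $\ell_\ast\le M-m$ and the expansion is guaranteed for all $i=1,\dots,m$. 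Then $y_m:=C^{-1}T^{\ell_\ast}x_M$ is nonzero (as $a_{\ell_\ast}\ge r^{\ell_\ast}\|x_M\|/\|C\|>0$), and from $S^iy_m=C^{-1}T^{\ell_\ast+i}x_M$ one reads off $\|S^iy_m\|=a_{\ell_\ast+i}\ge r'^{\,i}a_{\ell_\ast}=r'^{\,i}\|y_m\|$ for $i=1,\dots,m$, while $\|S^ky_m\|=a_{\ell_\ast+k}\to 0$; so $S$ is norm-unimodal with constant $r'$.

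The only genuine obstacle I anticipate is the conceptual one just described: recognizing that plain conjugation of the witness is insufficient and that a single orbit shift $T^{\ell_\ast}$, selected by the weighted minimization, exactly compensates for the constant $\kappa$. Once $\ell_\ast$ is in place, every remaining inequality is routine.
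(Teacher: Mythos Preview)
Your argument is correct. The minimization $\ell_\ast=\arg\min_{0\le j\le M} a_j/r'^{\,j}$ is a clean device: it produces the growth estimate $a_{\ell_\ast+i}\ge r'^{\,i}a_{\ell_\ast}$ automatically, and the lower bound $a_j/r'^{\,j}\ge (r/r')^j\|x_M\|/\|C\|$ combined with $a_0\le\|C^{-1}\|\,\|x_M\|$ pins down $\ell_\ast\le j_0$, so the range $i=1,\dots,m$ is covered. All the bookkeeping checks out.

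The paper organizes the same underlying idea differently. It first isolates an auxiliary equivalence: $T$ is norm-unimodal iff there exist $r>1$ and $N\in\mathbb N$ such that for every $m\ge N$ one can find $x_m$ with $\|T^ix_m\|\ge r^i\|x_m\|$ only for $i=N,\dots,m$ (call this $P(N)$). The implication $P(N)\Rightarrow P(N-1)$ is proved by a single orbit shift---replace $x_m$ by $T^{N-1}x_m$ whenever $\|T^{N-1}x_m\|<r^{N-1}\|x_m\|$---and then a finite downward induction gives $P(1)$. With this lemma in hand, the paper simply verifies that the naive witnesses $C^{-1}x_m$ satisfy $P(N_1)$ for $S=C^{-1}TC$ with a suitable $N_1$ (essentially your $j_0$) and a reduced constant $r_1<r$.

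So both arguments hinge on shifting along the $T$-orbit to absorb the distortion $\kappa=\|C\|\,\|C^{-1}\|$. You perform the shift in one stroke via the weighted minimization, which is slick and self-contained; the paper performs it iteratively inside a reusable lemma ($P(N)\Leftrightarrow P(1)$) that has some independent interest as an alternative characterization of norm-unimodality.
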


\begin{proof}
Firstly, claim that the following statements are equivalent.

$(a)$.  $T$ is norm-unimodal.

$(b)$.  There are constants $r
>1$ and $N \in \mathbb{N}$  such that for any $m\geq N$, there exist
$x_m\in X$ satisfying
$$
\lim\limits_{k\rightarrow\infty}\|T^kx_m\|=0,\ \ and \ \ || T^ix_m
||\geq r^i\|x_m\|, \ \ i=N,N+1,\ldots,m.
$$
For convenience, denote the property $(b)$ as $P(N)$.

$(a) \Rightarrow (b)$ is obvious.

$(b) \Rightarrow (a)$:  If $P(N)$ holds, then for any $m \geq N-1$,
there exist $x_{m} \in \mathbb{N}$ satisfying
$$
\| T^ix_m \|\geq r^i\|x_m\|, \ \ i=N,N+1,\ldots,m+N-1, \ \  and
 \ \ \lim\limits_{k\rightarrow\infty}\|T^kx_m\|=0.
$$
For $N-1$, it is either $||T^{N-1}x_{m}|| \geq r^{N-1}||x_{m}|| $ or
$||T^{N-1}x_{m}|| < r^{N-1}||x_{m}|| $.

If $||T^{N-1}x_{m}|| < r^{N-1}||x_{m}|| $,  set
$y_{m}=T^{N-1}x_{m}$. Then we have
\begin{eqnarray*}
&&||T^{N-1}y_{m}|| \geq r^{N-1}||r^{N-1}x_{m}|| >
r^{N-1}||T^{N-1}x_{m}||=r^{N-1}||y_{m}||, \\
&&||T^{i}y_{m}||=||T^{i+N-1}x_{m}||\geq r^{i}||y_{m}|| , N \leq i
\leq m, \ \  and \\
&&\lim\limits_{k\rightarrow\infty}\|T^k y_m\|=0.
\end{eqnarray*}
So in any case, $P(N-1)$ holds, i.e., there exist $z_{m} \in X$ such
that
$$
||T^{i}z_{m}|| \geq r^{i}||z_{m}|| , N-1 \leq i \leq m ,  \ \ and \
\ \lim\limits_{k\rightarrow\infty}\|T^kz_m\|=0.
$$
Continue in this matter finitely, one can see $(a)$ holds.

Now it's sufficient to show that $C^{-1}TC$ satisfies $(b)$.

Suppose $T$ is norm-unimodal. Then there is a constant $r
>1$ such that for any $m\in\mathbb{N}$, there exists $x_m\in
X$ satisfying
$$
\lim\limits_{k\rightarrow\infty}\|T^kx_m\|=0, \ \ and \ \ \| T^ix_m
\|\geq r^i\|x_m\|, \ \ i=1,2,\ldots,m.
$$
Choose $r_{1}$ such that $1< r_{1} < r$. Then there exist $N_{1}$
such that for any $k\geq N_{1}$, $(r/r_{1})^{k}\geq
||C||\cdot||C^{-1}||$. Given any $m\geq N_{1}$ , we have for
$N_{1}\leq i \leq m$,
$$||(C^{-1}TC)^{i}C^{-1}x_{m}||\geq
\frac{1}{||C||}||T^{i}x_{m}||\geq \frac{r^{i}}{||C||}||x_{m}||\geq
r_{1}^{i}||C^{-1}||\cdot||x_{m}||\geq r_{1}^{i}||C^{-1}x_{m}||,$$
and
$$\lim\limits_{k\rightarrow\infty}||(C^{-1}TC)^{k}C^{-1}x_{m}||=0.$$
Therefore, norm-unimodality is similar invariant.
\end{proof}

Next, let's consider the the spectra of norm-unimodal operators.
Denote $\mathbb{D}$ be the unit open disk on complex plane.
Moreover, $\mathbb{D}^{-}$ denoted as its closure and
$\partial\mathbb{D}$ denoted as its boundary.

\begin{proposition}\label{spec}
Let $T$ be a norm-unimodal operator on complex Hilbert space. Then
there exist $\{\lambda_{n}\}_{n=1}^{\infty} \subset \sigma(T) \cap
{\mathbb{D}^{-}}^{c}$ such that
$\lim\limits_{n\rightarrow\infty}\lambda_{n}=\lambda \in
\partial\mathbb{D}$.
\end{proposition}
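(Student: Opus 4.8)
The plan is to argue by contradiction, playing the spectral-radius lower bound forced by norm-unimodality against the elementary fact that an invertible operator whose inverse has spectral radius less than $1$ has no nonzero vector whose orbit tends to $0$. First, setting $i=m$ in the defining inequality $\|T^{i}x_{m}\|\geq r^{i}\|x_{m}\|$ (the witnesses $x_{m}$ may be taken of norm one) gives $\|T^{m}\|\geq r^{m}$ for every $m$, so $\rho(T)=\lim_{m}\|T^{m}\|^{1/m}\geq r>1$; in particular $\sigma_{+}:=\sigma(T)\cap(\mathbb{D}^{-})^{c}\neq\emptyset$. It then remains to exhibit a sequence in $\sigma_{+}$ converging to a point of $\partial\mathbb{D}$.

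Suppose, toward a contradiction, that $\overline{\sigma_{+}}\cap\partial\mathbb{D}=\emptyset$. Since $\overline{\sigma_{+}}$ is a compact subset of $\sigma(T)$ contained in $\{|z|\geq1\}$, this forces $\overline{\sigma_{+}}\subseteq\{|z|>1\}$; hence $\sigma_{+}$ is already closed and compact, and $\mathrm{dist}(\sigma_{+},\mathbb{D}^{-})=:\delta>0$, so $\sigma_{+}\subseteq\{|z|\geq1+\delta\}$. Thus $\sigma(T)=\sigma_{+}\sqcup\sigma_{-}$ with $\sigma_{-}:=\sigma(T)\cap\mathbb{D}^{-}$, and the circle $|z|=1+\delta/2$ separates these two pieces while avoiding $\sigma(T)$. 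The Riesz functional calculus now yields a $T$-invariant topological direct sum $X=X_{+}\oplus X_{-}$ with $T=T_{+}\oplus T_{-}$ and $\sigma(T_{\pm})=\sigma_{\pm}$; let $P_{+}$ be the bounded Riesz projection onto $X_{+}$ along $X_{-}$.

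Fix $m$ and write $x_{m}=u_{m}+v_{m}$ with $u_{m}\in X_{+}$, $v_{m}\in X_{-}$. Since $P_{+}$ is bounded and commutes with $T$, we have $\|T_{+}^{k}u_{m}\|=\|P_{+}T^{k}x_{m}\|\leq\|P_{+}\|\,\|T^{k}x_{m}\|\to0$ as $k\to\infty$. But $0\notin\sigma(T_{+})\subseteq\{|z|\geq1+\delta\}$, so $T_{+}$ is invertible with $\rho(T_{+}^{-1})\leq(1+\delta)^{-1}<1$, whence $\|T_{+}^{-k}\|\to0$; plugging this into $\|u_{m}\|\leq\|T_{+}^{-k}\|\,\|T_{+}^{k}u_{m}\|$ forces $u_{m}=0$. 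Hence $x_{m}\in X_{-}$ for every $m$, so $T^{m}x_{m}=T_{-}^{m}x_{m}$ and $\|T_{-}^{m}\|\geq\|T_{-}^{m}x_{m}\|/\|x_{m}\|\geq r^{m}$, giving $\rho(T_{-})\geq r>1$ --- impossible, since $\sigma(T_{-})=\sigma_{-}\subseteq\mathbb{D}^{-}$. This contradiction shows $\overline{\sigma_{+}}\cap\partial\mathbb{D}\neq\emptyset$; choosing $\lambda$ in this intersection (a genuine limit point of $\sigma_{+}$, since $|\lambda|=1$ rules out $\lambda\in\sigma_{+}$) produces a sequence $\lambda_{n}\in\sigma(T)\cap(\mathbb{D}^{-})^{c}$ with $\lambda_{n}\to\lambda\in\partial\mathbb{D}$, as wanted.

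The step I expect to be most delicate is the middle one: checking that the spectral splitting is legitimate --- which it is precisely because the contradiction hypothesis furnishes an annulus separating the outer part of $\sigma(T)$ from the rest --- and then combining the decay condition $\|T^{k}x_{m}\|\to0$ with the invertibility of the outer summand $T_{+}$ to confine each $x_{m}$ to the contracting subspace $X_{-}$. Once that is done, the clash with $\rho(T_{-})\leq1$ is immediate and the conclusion follows.
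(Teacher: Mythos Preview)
Your proof is correct and follows essentially the same route as the paper: argue by contradiction, use the assumed gap between $\sigma(T)\cap(\mathbb{D}^{-})^{c}$ and $\partial\mathbb{D}$ to invoke the Riesz decomposition, show via invertibility of the outer summand that each norm-unimodal witness $x_m$ must lie entirely in the inner subspace, and then obtain the contradiction $\rho(T_{-})\geq r>1$. The only cosmetic differences are that you work directly with the Riesz topological direct sum $X_{+}\oplus X_{-}$ (the paper passes to an orthogonal upper-triangular form on $H_1\oplus H_1^{\perp}$), and you explicitly verify $\sigma_{+}\neq\emptyset$ and extract the limiting sequence at the end --- both of which are welcome clarifications but do not change the argument.
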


\begin{proof}
Suppose it's not true.  Then according to Riesz Decomposition
Theorem, we can obtain
$$T=\begin{matrix}\begin{bmatrix}
T_{1}\\
&T_{2}\\
\end{bmatrix}&
\begin{matrix}
H_1\\
 H_2\end{matrix}\end{matrix},$$
where $\sigma(T_{1})=\sigma(T)\cap \mathbb{D}^{-}$,
$\sigma(T_{2})=\sigma(T)-\sigma(T_{1})$ and $\sigma(T_{2})\subseteq
\{z; |z|\geq \delta > 1 \}$

Furthermore,
$$T=\begin{matrix}\begin{bmatrix}
T_{1}&*\\
&\widetilde{T_{2}}\\
\end{bmatrix}&
\begin{matrix}
H_1\\
  H_1^{\perp}\end{matrix}\end{matrix},$$
where $\sigma(\widetilde{T_2})=\sigma(T_{2}).$

Since T is norm-unimodal, then there is a constant $r>1$ such that
for any $m \in \mathbb{N}$, there exist $x_{m} \in H$ satisfying
$$
\lim\limits_{k\rightarrow\infty}\|T^kx_m\|=0, \ \ and \ \ \| T^ix_m
\|\geq r^i\|x_m\|, \ \ i=1,2,\ldots,m.
$$
In addition,
$$x_{m}=x_{m}^{1}+x_{m}^{2}, \ \ \ where \ \  x_{m}^{1}\in H_{1}, \ \ x_{m}^{2}\in H_{1}^{\perp} ,$$ then
$\lim\limits_{k\rightarrow\infty}\|\widetilde{T_2}^kx_m^2\|=0$.

According to Spectral Mapping Theorem and Spectral Radius Formula,
$$r_{1}(\widetilde{T_2})^{-1}=r(\widetilde{T_2}^{-1})=\lim\limits_{k\rightarrow\infty}\|\widetilde{T_2}^{-k}\|^{\frac{1}{k}}.$$
Note $r_{1}(\widetilde{T_2})\geq \delta > 1$, one can choose
$\epsilon
> 0$ such that $r_{1}(\widetilde{T_2})^{-1}+\epsilon < 1$. Then
there exist $M \in \mathbb{N}$ such that for $k\geq M$,
$$
\frac{1}{\|\widetilde{T_2}^{-k}\|} \geq
(\frac{1}{r_{1}(\widetilde{T_2})^{-1}+\epsilon})^{k}.
$$
Since as $k\rightarrow\infty$,
$$
0\leftarrow ||\widetilde{T_2}^kx_m^2||\geq
\frac{1}{||\widetilde{T_2}^{-k}||}||x_{m}^{2}|| \geq
(\frac{1}{r_{1}(\widetilde{T_2})^{-1}+\epsilon})^{k}\|x_{m}^{2}\|\geq
\|x_{m}^{2}\|,
$$
then $x_{m}^{2}=0$ and hence $T^{k}x_{m}=T_{1}^{k}x_{m}^{1}$.
Consequently, for $1\leq i \leq m$,
$$
||T_{1}^{i}x_{m}^{1}||=||T^{i}x_{m}||\geq
r^{i}||x_{m}||=r^{i}||x_{m}^{1}||.
$$
So $r(T_{1}) \geq r >1$. It is a contradiction.
\end{proof}
Speaking intuitively, in the spectra of each norm-unimodal operator
there should be a sequence of points outside the unit circle
converges to a point on the unit circle. And there exist
norm-unimodal operators whose spectra disjoint with the unit open
disk.
\begin{example}
Let $T$ be a bilateral weighted shift operator with weights
$\{\omega_n\}_{n\in \mathbb{Z}}$ as
$$
\omega_n=\left\{\begin{array}{cc}
2, \ \ \ &\mbox{if \ $n\geq0$} \\
\dfrac{|n|-1}{|n|}, \ \ \ &\mbox{if \ $n<0$}
\end{array}\right.
$$
Obviously, $T$ is norm-unimodal but $\sigma(T)\cap \mathbb{D}=\phi$.
\end{example}

From theorem \ref{D-C-C}, we see norm-unimodality implies
distributional chaos. However, there exist distributionally chaotic
operators but not norm-unimodal and its spectra coincides the unit
circle.

\begin{example}
Let $T$ be a bilateral weighted shift operator with weights
$\{\omega_n\}_{n\in \mathbb{Z}}$ as
$$
\omega_n=\left\{\begin{array}{cc}
\dfrac{n+2}{n+1}, \ \ \ &\mbox{if \ $n\geq0$} \\
\dfrac{|n|-1}{|n|}, \ \ \ &\mbox{if \ $n<0$}
\end{array}\right.
$$
Obviously, $T$ is distributional chaotic and $\sigma(T)=
\partial\mathbb{D}$, although it is not norm-unimodal.
\end{example}

At the end of this section, let's consider distributional chaos in
nest algebra via the technique of norm-unimodality.

\begin{definition}
A nest $\mathfrak{N}$ is a chain of closed subspaces of a Hilbert
space $H$ containing $\{0\}$ and $H$ which is closed under
intersection and closed span. The nest algebra $\mathcal
{T}(\mathfrak{N})$ of a given nest $\mathfrak{N}$ is the set of all
operators $T$ such that $TN \subseteq N$ for every element $N$ in
$\mathfrak{N}$.
\end{definition}

\begin{definition}
Given a nest $\mathfrak{N}$. For $N$ belonging to $\mathfrak{N}$,
define  $$N_{-}=\bigvee \{N^{'} \in \mathfrak{N} ; \  N^{'}< N \}$$
If $N_{-} \neq N$, then we call $N_{-}$ the immediate predecessor to
$N$, otherwise $N$ has no immediate predecessor.
\end{definition}

\begin{lemma}\label{lem1}
$\mathcal {T}(\mathfrak{N})$ is a weak operator closed subalgebra of
$B(H)$.
\end{lemma}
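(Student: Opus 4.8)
The plan is to verify the two assertions separately: that $\mathcal{T}(\mathfrak{N})$ is a subalgebra of $B(H)$ containing the identity, and that it is closed in the weak operator topology.

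For the algebraic part, I would simply check that the defining relation $TN\subseteq N$ is preserved by the vector space operations and by composition. If $S,T\in\mathcal{T}(\mathfrak{N})$ and $\lambda,\mu$ are scalars, then for every $N\in\mathfrak{N}$ we have $(\lambda S+\mu T)N\subseteq \lambda SN+\mu TN\subseteq N$ and $(ST)N=S(TN)\subseteq SN\subseteq N$; also $IN=N$. Hence $\mathcal{T}(\mathfrak{N})$ is a unital subalgebra of $B(H)$.

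For the topological part, the idea is to rewrite the invariance condition as a family of scalar conditions that are manifestly continuous for the weak operator topology. Since every $N\in\mathfrak{N}$ is a \emph{closed} subspace, $TN\subseteq N$ holds if and only if $\langle Tx,y\rangle=0$ for all $x\in N$ and all $y\in N^{\perp}$, using that $(N^{\perp})^{\perp}=N$. Therefore
\[
\mathcal{T}(\mathfrak{N})=\bigcap_{N\in\mathfrak{N}}\ \bigcap_{\substack{x\in N\\ y\in N^{\perp}}}\bigl\{\,T\in B(H):\langle Tx,y\rangle=0\,\bigr\}.
\]
For fixed $x,y$ the functional $T\mapsto\langle Tx,y\rangle$ is by definition WOT-continuous, so each set appearing in the intersection is WOT-closed, and an arbitrary intersection of closed sets is closed. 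Equivalently, if a net $T_{\alpha}\to T$ in the weak operator topology with each $T_{\alpha}\in\mathcal{T}(\mathfrak{N})$, then for $x\in N$ and $y\in N^{\perp}$ we get $\langle Tx,y\rangle=\lim_{\alpha}\langle T_{\alpha}x,y\rangle=0$, so $Tx\in N$ and $T\in\mathcal{T}(\mathfrak{N})$.

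I do not expect any real obstacle here; the statement is essentially bookkeeping. The only points deserving a moment's attention are that one should argue with nets rather than sequences, since the weak operator topology need not be metrizable on $B(H)$, and that the hypothesis that the members of a nest are closed subspaces is precisely what lets one pass from ``$\langle Tx,y\rangle=0$ for all $y\perp N$'' back to the conclusion $Tx\in N$.
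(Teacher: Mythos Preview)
Your proof is correct and self-contained. The paper does not actually prove this lemma; it simply refers the reader to Davidson's monograph on nest algebras, so your argument supplies strictly more than the paper does. The verification you give---checking closure under linear combinations and products directly, and then expressing the invariance condition $TN\subseteq N$ as the vanishing of the WOT-continuous functionals $T\mapsto\langle Tx,y\rangle$ for $x\in N$, $y\in N^{\perp}$---is the standard one and is essentially what one finds in Davidson. Your remarks about using nets and about needing $N$ closed so that $(N^{\perp})^{\perp}=N$ are apt.
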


The proof can be referred to \cite{Davidson}.

\begin{proposition}
Given a nest $\mathfrak{N}$ ,  then there exist an operator T in
$\mathcal {T}(\mathfrak{N})$ such that T is distributionally
chaotic.
\end{proposition}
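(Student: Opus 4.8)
The plan is to reduce to the case of a single nontrivial gap in the nest and then import a concrete norm-unimodal operator there, invoking Theorem \ref{D-C-C}. First I would dispose of the degenerate case: if the nest $\mathfrak{N}=\{0,H\}$ is trivial, then $\mathcal{T}(\mathfrak{N})=B(H)$ and it suffices to exhibit any norm-unimodal operator on a separable Hilbert space — for instance a bilateral weighted shift as in the Examples above, or a direct sum of weighted backward shifts with weights engineered so that $\|T^ix_m\|\geq r^i\|x_m\|$ for $i\leq m$ while $T^kx_m\to 0$. So we may assume $\mathfrak{N}$ has a nontrivial element $N$ with $0\neq N\neq H$.

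The key structural step is to locate inside $\mathfrak{N}$ a \emph{gap}, i.e.\ an element $N$ together with its immediate predecessor $N_-$ with $N_-\neq N$, OR, failing that, to use a countable strictly increasing (or decreasing) chain $N_1\subsetneq N_2\subsetneq\cdots$ in $\mathfrak{N}$. In the gap case, let $M=N\ominus N_-$; any operator supported on $M$ (i.e.\ vanishing on $M^\perp$ and mapping $M$ into $M$) automatically lies in $\mathcal{T}(\mathfrak{N})$, since for every $N'\in\mathfrak{N}$ we have either $N'\leq N_-$, whence $N'\perp M$ and $TN'=0\subseteq N'$, or $N'\geq N$, whence $M\subseteq N'$ and $TN'\subseteq M\subseteq N'$. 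Thus $\mathcal{T}(\mathfrak{N})$ contains an isometric copy of $B(M)$; if $M$ is infinite-dimensional we plant a norm-unimodal operator there and are done by Theorem \ref{D-C-C}, and if every gap is finite-dimensional we instead fall back on the chain case. In the chain case (which always occurs when $\mathfrak{N}$ has no finite-dimensional gap carrying enough room), set $M_k=N_{k+1}\ominus N_k$, pick unit vectors $e_k\in M_k$, and define $T$ on $\overline{\mathrm{span}}\{e_k\}$ by a weighted shift $Te_k=w_k e_{k-1}$ (and $T=0$ on the orthogonal complement of this span), with the convention $Te_1=0$. Because $Te_k\in M_{k-1}\subseteq N_k$ and more generally $T$ pushes each $N_k$ into $N_{k-1}\subseteq N_k$ while killing everything orthogonal to the $e_k$'s, one checks $TN'\subseteq N'$ for all $N'\in\mathfrak{N}$, so $T\in\mathcal{T}(\mathfrak{N})$. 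Choosing the weights as in the bilateral-shift examples (growth at one end to get $\|T^ix_m\|\geq r^i\|x_m\|$ for $1\leq i\leq m$, and ultimate decay to force $\|T^kx_m\|\to 0$) makes $T$ norm-unimodal, hence distributionally chaotic.

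The main obstacle I anticipate is the verification that the chosen $T$ genuinely maps every $N'\in\mathfrak{N}$ into itself — the nest can be a complicated (even uncountable, non-atomic) chain, so one must argue carefully that for an arbitrary $N'\in\mathfrak{N}$ the vector $Tx$ stays in $N'$ for all $x\in N'$, using only the order relation of $N'$ against the reference subspaces $N_k$ (or $N_-,N$) and the fact that $T$ annihilates the orthocomplement of the relevant span. A secondary technical point is ensuring the span $\overline{\mathrm{span}}\{e_k\}$ and its complement are each invariant-compatible with the nest; this is why it is cleanest to take $T$ to be $0$ off that span. Once invariance is secured, the norm-unimodality estimates are exactly those already used for weighted shifts earlier in the paper, and Theorem \ref{D-C-C} closes the argument.
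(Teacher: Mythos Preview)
Your approach is correct and essentially the same as the paper's: both locate an infinite orthogonal sequence $\{e_k\}$ sitting in successive differences of nest elements, build a backward-shift-type operator on it, verify membership in $\mathcal{T}(\mathfrak{N})$ via the rank-one pieces (the paper invokes WOT-closedness of the nest algebra, Lemma~\ref{lem1}, at this step), and deduce norm-unimodality to apply Theorem~\ref{D-C-C}. The paper organizes the case split a little differently --- it descends from $H$ by immediate predecessors until it either produces an infinite chain of atoms or reaches some $H_M$ with no immediate predecessor, below which an increasing chain must exist --- and its concrete operator is the block-diagonal nilpotent $\bigoplus_k W_k$ with constant weight $2$, which is just your backward shift with periodically inserted zero weights; in particular your reference to bilateral shifts and ``ultimate decay'' is unnecessary, since constant weights $w_k\equiv 2$ already give $\|T^i e_{m+1}\|=2^i$ for $i\le m$ and $T^{m+1}e_{m+1}=0$.
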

\begin{proof}
If $H$ has the immediate predecessor $H_{1}$, then $H_{1}$ will be
considered. Continue in this manner, we may obtain  either a chain
$H > H_{1} > H_{2} > \cdots $, or a $H_{M}$ which has no immediate
predecessor.

The first case. Let $F_{i}=H_{i-1}\ominus H_{i}$ and $H_{0}=H$.
Choose $f_{i} \in F_{i}$ such that $\|f_{i}\|=1$. Define
$$
A=
\underset{n=1}{\overset{+\infty}{\sum}}\underset{i=0}{\overset{n-1}{\sum}}f_{a_{n}+i+1}\otimes2f_{a_{n}+i},
$$
where $a_{1}=1$ and $a_{n}-a_{n-1}=n$. Since each $f_{a_{n}+i+1}
\otimes 2f_{a_{n}+i}$ belongs to $\mathcal{T}(\mathfrak{N})$, then
$A$ belongs to $\mathcal{T}(\mathfrak{N})$ by lemma \ref{lem1}.

The second case. Since $H_{M}$ has no immediate predecessor,there
exist a chain $N_{1}<N_{2}<\cdots<N_{i}<\cdots<H_{M}$ in
$\mathfrak{N}$ such that $SOT-\lim_{i \rightarrow
\infty}P(N_{i})=P(H_{M})$. Let $E_{i}=N_{i+1}\ominus N_{i}$,  $i\geq
1$. Choose\ $e_{i}\in E_{i}$ such that ${\parallel e_{i}
\parallel}=1$. Define
$$
B=
\underset{n=1}{\overset{+\infty}{\sum}}\underset{i=0}{\overset{n-1}{\sum}}e_{a_{n}+i}\otimes2e_{a_{n}+i+1},
$$
where $a_{1}=1$ and $a_{n}-a_{n-1}=n$. Since each
$e_{a_{n}+i}\otimes2e_{a_{n}+i+1}$ belongs to
$\mathcal{T}(\mathfrak{N})$, then  $A$ belongs to
$\mathcal{T}(\mathfrak{N})$ by lemma \ref{lem1}.

One can easily see that both $A$ and $B$ have the form of matrix
under suitable bases as follows :
$$\begin{matrix}\begin{bmatrix}
W_{1}\\
&W_{2}\\
&&\ddots\\
&&&W_{k-1}\\
&&&&\ddots
\end{bmatrix}&
\begin{matrix}
\end{matrix}\end{matrix},$$
where $${W_{k-1}=\begin{matrix}\begin{bmatrix}
0&2\\
&\ddots&\ddots\\
&&\ddots&2\\
&&&0\\
\end{bmatrix}&
\begin{matrix}
\end{matrix}\end{matrix}}_{(k\times k)}.$$

$T$ could seem as this matrix, so one can easily prove that $T$ is
norm-unimodal and hence is distributional chaos.
\end{proof}

\section{Normal operators and compact operators}

In this section, we'll consider norm-unimodality and Li-Yorke chaos
for normal operators and compact operators.

\begin{proposition}
Let $N$ be a normal operator on separable complex Hilbert space.
Then $N$ is impossible to be Li-Yorke chaotic. Consequently, $N$ is
neither distributionally chaotic nor norm-unimodal.
\end{proposition}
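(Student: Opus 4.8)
The plan is to reduce the statement to a one-vector criterion and then apply the spectral theorem. First I would observe that it suffices to show $N$ admits no Li-Yorke pair at all. If $\{x,y\}\subset H$ is a Li-Yorke pair for $N$, set $u=x-y\neq 0$; since $N$ is linear, $N^n x-N^n y=N^n u$, so the pair being Li-Yorke means exactly
$$
\limsup_{n\to\infty}\|N^n u\|>0\quad\text{and}\quad\liminf_{n\to\infty}\|N^n u\|=0 .
$$
Hence it is enough to prove that for every $u\in H$ the sequence $(\|N^n u\|)_{n\ge 0}$ converges in $[0,+\infty]$: then $\liminf=\limsup$, the two displayed conditions are incompatible, so no Li-Yorke pair exists and, a fortiori, no uncountable scrambled set.

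Second, I would invoke the spectral theorem for the normal operator $N$: there is a projection-valued measure $E$ supported on the compact set $\sigma(N)$ with $N=\int z\,dE(z)$, and for a fixed $u$ the scalar measure $\mu_u(\Delta):=\langle E(\Delta)u,u\rangle$ is a finite positive Borel measure with $\mu_u(\sigma(N))=\|u\|^2$ and
$$
\|N^n u\|^2=\int_{\sigma(N)}|z|^{2n}\,d\mu_u(z),\qquad n=0,1,2,\dots
$$

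Third, I would split this integral over the three Borel sets $A_{<}=\{|z|<1\}$, $A_{=}=\{|z|=1\}$, $A_{>}=\{|z|>1\}$. On $A_{<}$, $|z|^{2n}\to 0$ pointwise and $|z|^{2n}\le 1\in L^1(\mu_u)$, so dominated convergence gives $\int_{A_{<}}|z|^{2n}\,d\mu_u\to 0$. On $A_{=}$, $\int_{A_{=}}|z|^{2n}\,d\mu_u=\mu_u(A_{=})$ for all $n$. On $A_{>}$, the sequence $\int_{A_{>}}|z|^{2n}\,d\mu_u$ is nondecreasing, hence converges in $[0,+\infty]$, and if $\mu_u(A_{>})>0$ then $\mu_u(\{|z|\ge\rho\})>0$ for some $\rho>1$, whence the integral is $\ge\rho^{2n}\mu_u(\{|z|\ge\rho\})\to+\infty$. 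Adding the three pieces, $\|N^n u\|^2$ converges in $[0,+\infty]$ — to $\mu_u(A_{=})$ when $\mu_u(A_{>})=0$, and to $+\infty$ otherwise. This is exactly the convergence claimed above, so $N$ is not Li-Yorke chaotic. The remaining assertions follow at once: distributional chaos implies Li-Yorke chaos, so $N$ is not distributionally chaotic, and by Theorem \ref{D-C-C} norm-unimodality implies distributional chaos, so $N$ is not norm-unimodal.

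I do not expect a genuine obstacle here: the argument is a routine use of the spectral theorem. The only point needing a little care is the behaviour on $\{|z|>1\}$ — boundedness of $N$ guarantees each integral is finite but not that the sequence stays bounded, so one must separately note that any spectral mass strictly outside the unit circle forces $\|N^n u\|\to\infty$ (which is harmless, as it makes $\liminf\neq 0$). The conceptual heart of the proof is simply that, for a normal operator, the orbit norms decompose as a vanishing part plus an eventually monotone part, hence converge, and convergence is precisely what a Li-Yorke pair forbids.
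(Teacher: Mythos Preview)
Your proof is correct and follows essentially the same route as the paper: both invoke the spectral theorem to express $\|N^{n}u\|^{2}$ as an integral of $|z|^{2n}$ against a spectral measure and then split according to the position of $|z|$ relative to the unit circle. The paper uses the multiplication-operator form and shows that a subsequence tending to $0$ forces the full sequence to $0$, while you use the projection-valued-measure form and prove the slightly stronger statement that $\|N^{n}u\|$ always converges in $[0,+\infty]$; the underlying idea is the same.
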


\begin{proof}
Since $N$ is normal, there exist finite positive regular Borel
measure $\mu$ and Borel function $\eta \in L^{\infty}(\sigma(N) ,
\mu)$ such that $N$ and $M_{\eta}$ are unitarily equivalent.
$M_{\eta}$ is multiplication by $\eta$ on $L^{2}(\sigma(N) , \mu)$.
To see $M_{\eta}$ being not Li-Yorke chaotic, it's sufficient to
prove $\lim\limits_{m\rightarrow\infty}\|M_{\eta}^m(f)\|=0$ if $0\in
\omega(f)$.

Let
\begin{eqnarray*}
&&\Delta_{1}=\{z\in \sigma(N) ; |\eta(z)|\geq 1 \} ,\\
&&\Delta_{2}=\{z\in \sigma(N) ; |\eta(z)|< 1 \},\\
&&\Delta_{3}=\{z\in \sigma(N) ; f(z)=0 \ \ a.e. \ [\mu] \},\\
&&\Delta_{4}=\{z\in \sigma(N) ; f(z)\neq 0 \ \ a.e. \ [\mu] \}.
\end{eqnarray*}
Since $0\in \omega(f)$, there exist $\{m_{k}\}_{k=1}^{\infty} $ such
that $\lim\limits_{m_k\rightarrow\infty}\|M_{\eta}^{m_k}(f)\|=0$.
Then
\begin{eqnarray*}
0\leftarrow
\|M_{\eta}^{m_k}(f)\|^{2}&=&\int_{\sigma(N)}|\eta^{m_k}f|^{2}d\mu \\
&=& \int_{\Delta_{1}\cap \Delta_{4}}|\eta^{m_k}f|^{2}d\mu +
\int_{\Delta_{2}\cap \Delta_{4}}|\eta^{m_k}f|^{2}d\mu \\
&\geq& \int_{\Delta_{1}\cap \Delta_{4}}|f|^{2}d\mu +
\int_{\Delta_{2}\cap \Delta_{4}}|\eta^{m_k}f|^{2}d\mu,
\end{eqnarray*}
and hence $\mu(\Delta_{1}\cap \Delta_{4})=0$. For any $m\in
\mathbb{N}$, there exist $k$ such that $m_k \leq m < m_{k+1}$.
Consequently,
\begin{eqnarray*}
||M_{\eta}^m(f)||^2 &=& \int_{\Delta_{2}\cap \Delta_{4}}|\eta^m
f|^{2}d\mu \\
&=& \int_{\Delta_{2}\cap \Delta_{4}}|\eta^{m_k}
f|^{2}|\eta^{m-m_{k}}|^2d\mu \\
&\leq& \int_{\Delta_{2}\cap \Delta_{4}}|\eta^{m_k} f|^{2}d\mu \\
&=& \|M_{\eta}^{m_k}(f)\|^{2}.
\end{eqnarray*}
Therefore, $\lim\limits_{m\rightarrow\infty}\|M_{\eta}^m(f)\|=0$.
\end{proof}

\begin{proposition}
Let $K$ be a compact operator on complex Hilbert space, then $K$ is
impossible to be Li-Yorke chaotic. Consequently, $K$ is neither
distributional chaotic nor norm-unimodal.
\end{proposition}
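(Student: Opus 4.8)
The plan is to reduce the compact case to the spectral picture of $K$, which is very rigid: $\sigma(K)$ is a countable set with $0$ as its only possible accumulation point, and every nonzero point of $\sigma(K)$ is an eigenvalue of finite multiplicity (the Riesz–Schauder theory). I would split $\sigma(K)$ according to modulus. Fix any $\rho$ with $0<\rho<1$; since the only accumulation point of the spectrum is $0$, the part of $\sigma(K)$ lying in $\{|z|\ge\rho\}$ is a \emph{finite} set of eigenvalues, each with a finite-dimensional (generalized) eigenspace. By the Riesz decomposition write $H=H_0\oplus H_1$ reducing $K$ into $K_0\oplus K_1$, where $\sigma(K_1)\subset\{|z|\ge\rho\}$ with $H_1$ finite-dimensional, and $\sigma(K_0)\subset\{|z|<\rho\}$, so $r(K_0)<\rho<1$ and hence $\|K_0^n\|\to 0$.

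Next I would analyze the orbit of an arbitrary vector $x=x_0\oplus x_1$. On the $H_0$ part, $\|K^n x_0\|=\|K_0^n x_0\|\to 0$ because the spectral radius is $<1$. On the finite-dimensional piece $H_1$, $K_1$ is just a matrix; by Jordan form (or by splitting $K_1$ further along the circle $|z|=1$), the orbit $K_1^n x_1$ either tends to $0$ (if all eigenvalues relevant to $x_1$ have modulus $<1$), or stays bounded away from $0$ and in fact $\|K_1^n x_1\|\to\infty$ is impossible only when eigenvalues of modulus exactly $1$ occur, in which case $\|K_1^n x_1\|$ is bounded below by a positive constant for all large $n$ — so in every case $\liminf_n\|K^n x_1\|>0$ unless $x_1=0$. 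Therefore $\liminf_n\|K^n x\|=0$ forces $x_1=0$, i.e. $x\in H_0$, and then $\|K^n x\|\to 0$. In other words, the set $\{x: \liminf_n\|K^n x\|=0\}$ coincides with $\{x:\lim_n\|K^n x\|=0\}=H_0$.

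Now suppose $\{x,y\}$ is a Li–Yorke pair. Then $\liminf_n\|K^n(x-y)\|=0$, so by the previous paragraph $\lim_n\|K^n(x-y)\|=0$, which kills the $\limsup$ condition. Hence $K$ has no Li–Yorke pair at all, so it cannot be Li–Yorke chaotic; since distributional chaos implies Li–Yorke chaos and norm-unimodality implies distributional chaos (Theorem~\ref{D-C-C}), $K$ is neither distributionally chaotic nor norm-unimodal. The one point needing care — the main obstacle — is the behavior on the unit circle: I must rule out that a genuinely oscillating orbit on a finite-dimensional invariant subspace can be both proximal to $0$ along a subsequence and bounded away from $0$ along another. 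This is handled by the fact that on $H_1$ the operator is similar to a direct sum of Jordan blocks; a block with eigenvalue of modulus $\ne 1$ gives orbits tending to $0$ or to $\infty$ monotonically in norm, while a block with eigenvalue $\lambda$, $|\lambda|=1$, of size $\ge 2$ gives $\|K_1^n x_1\|\to\infty$, and a diagonalizable part with unimodular eigenvalues gives $\|K_1^n x_1\|$ bounded below by a positive constant (a finite sum of terms $|c_j|\,|\lambda_j|^n=|c_j|$ cannot approach $0$). So no subsequence can drag $\|K_1^n x_1\|$ to $0$ unless $x_1=0$, which is exactly what the argument needs.
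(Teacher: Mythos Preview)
Your strategy is the paper's strategy: use the Riesz decomposition to peel off a finite-dimensional invariant piece carrying the ``large'' eigenvalues, note that on the complementary piece the spectral radius is $<1$ so orbits die, and then use Jordan form on the finite-dimensional piece to see that a nonzero vector there cannot have $\liminf_n\|K^n\cdot\|=0$. The paper splits exactly at the unit circle ($\sigma(K_1)=\sigma(K)\cap\mathbb{D}$) and does an explicit matrix computation for a Jordan block with $|\mu|=1$; your qualitative Jordan-block discussion plays the same role.

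There is one genuine slip to repair. Because you fixed an \emph{arbitrary} $\rho\in(0,1)$, your finite-dimensional space $H_1$ may well carry eigenvalues of modulus in $[\rho,1)$. Your own sentence acknowledges that then $K_1^n x_1\to 0$ is possible with $x_1\neq 0$, yet two lines later you conclude ``$\liminf_n\|K^n x\|=0$ forces $x_1=0$'' and ``$\{x:\lim_n\|K^n x\|=0\}=H_0$''; both are false with this $\rho$. The fix is easy: either split at $|z|=1$ as the paper does (legitimate since $\sigma(K)\cap\{|z|\ge 1\}$ is finite, hence both spectral pieces are closed), or keep your $\rho$ but weaken the intermediate claim to ``$\liminf=0$ forces the component of $x_1$ along generalized eigenspaces with $|\lambda|\ge 1$ to vanish''. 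Either way the conclusion you actually need, $\liminf_n\|K^n x\|=0\Rightarrow \lim_n\|K^n x\|=0$, survives and the rest of your argument is fine.
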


\begin{proof}
According to Riesz Decomposition Theorem, we have
$$K=\begin{matrix}\begin{bmatrix}
K_{1}\\
&K_{2}\\
\end{bmatrix}&
\begin{matrix}
H_1\\
 H_2\end{matrix}\end{matrix}$$
where $\sigma(K_{1})=\sigma(K)\cap \mathbb{D}$ and
$\sigma(K_{2})=\sigma(K)-\sigma(K_{1})$ .

Furthermore,
$$K=\begin{matrix}\begin{bmatrix}
K_{1}&*\\
&\widetilde{K_{2}}\\
\end{bmatrix}&
\begin{matrix}
H_1\\
  H_1^{\perp}\end{matrix}\end{matrix}$$
and $\sigma(\widetilde{K_2})=\sigma(K_{2})=\{\mu_1 , \mu_2 , \ldots
,\mu_l \}$.

Since $K$ is a compact operator , then

(1) there exists $0<\rho<1 , N\in \mathbb{N}$ such that for any
$x\in H_{1}$, $\|K^n(x)\|\leq \rho^n \|x\|$, for every $n\geq N$.

(2) $\widetilde{K_{2}}$ is similar to Jordan model
$J=\bigoplus_{i=1}^{l}\{\bigoplus_{j=1}^{k_i}J_{n_{j}^{i}}(\mu_i)\},$
where
$$
{J_{n}(\mu)=\begin{matrix}\begin{bmatrix}
\mu&1\\
&\ddots&\ddots\\
&&\ddots&1\\
&&&\mu\\
\end{bmatrix}&
\begin{matrix}
\end{matrix}\end{matrix}}_{(n\times n)}.
$$
Hence
$$
K \sim T=\begin{matrix}\begin{bmatrix}
K_1&*\\
&J\\
\end{bmatrix}&
\begin{matrix}
H_1\\
  H_1^{\perp}\end{matrix}\end{matrix}.
$$

Consequently, $T$ and $K$ are simultaneously Li-Yorke chaotic or
not. At present, it suffice to consider the condition of only one
Jordan block $J=J_{n}(\mu)$.

If $|\mu|>1$, one can use the technology of proposition \ref{spec}
to obtain the result.

Let $|\mu|=1$. Since the dimension of $H_{1}^{\perp}$ is finite,
then for each $y\in H_{1}^{\perp}$,
$$
y=y_1 e_1 +y_2 e_2 +
\ldots +y_n e_n,
$$
where $\{e_1 , e_2 ,\ldots ,e_n\}$ is an
orthonormal basis of $H_{1}^{\perp}$.

For each $z\in H$, there is a unique decomposition $z=x+y$ where
$x\in H_{1}$ and $y\in H_{1}^{\perp}$. Claim that $y=0$ if $0\in
\omega(z)$. Suppose $y\neq0$. There must be $i$ such that $y_i \neq
0 , \ y_{i+1}=y_{i+2}=\ldots=y_n =0$. Then
\begin{eqnarray*}
&&\|T^m(z)\|^2 \geq \|J^{m}(y)\|^2=\|\begin{matrix}\begin{bmatrix}
C_{m}^0 \mu^m&C_{m}^1 \mu^{m-1}&\cdots&C_{m}^{n-1} \mu^{m-n+1}\\
&C_{m}^0 \mu^m&\cdots&C_{m}^{n-2} \mu^{m-n+2}\\
&&\ddots&\vdots\\
&&&C_{m}^0 \mu^m\\
\end{bmatrix}&
\begin{matrix}
\end{matrix}\end{matrix} \begin{matrix}\begin{bmatrix}
y_1\\
y_2\\
\vdots\\
y_n
\end{bmatrix}&
\begin{matrix}
\end{matrix}\end{matrix}\|^2 \\
&&=|C_{m}^0 \mu^m y_1+C_{m}^1\mu^{m-1}y_2+\ldots+C_{m}^{n-1}
\mu^{m-n+1}y_n|^2+ \\
& &|C_{m}^0 \mu^m y_2 + C_{m}^1\mu^{m-1}y_3+\ldots+C_{m}^{n-2}
\mu^{m-n+2}y_n|^2 +\ldots+|C_{m}^0 \mu^m
y_n|^2 \\
&& \geq |C_{m}^0 \mu^m y_i +
C_{m}^1\mu^{m-1}y_{i+1}+\ldots+C_{m}^{n-i}
\mu^{m-n+i}y_n|^2\\
&& = |y_i|^2.
\end{eqnarray*}
It is a contradiction to $0\in \omega(z)$.

Consequently, if $0\in \omega(z)$, we have
$\lim\limits_{m\rightarrow\infty}\|T^m(z)\|=\lim\limits_{m\rightarrow\infty}\|K_{1}^m(x)\|=0$.
Therefore, $T$ is impossible to be Li-Yorke chaotic, and so is $K$.
Furthermore, $K$ is neither distributional chaotic nor
norm-unimodal.
\end{proof}

\begin{proposition}
Let K be a compact operator on complex Hilbert space, $\lambda \in
\mathbb{C}$. Then $\lambda I+K$ is not norm-unimodal.
\end{proposition}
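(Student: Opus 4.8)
The plan is to reduce to the case $\lambda = 0$ first, then exploit the spectral structure of $K$ via the Riesz decomposition exactly as in the proof that a compact operator is not norm-unimodal. If $\lambda I + K$ were norm-unimodal with norm-unimodal constant $r > 1$, then by Proposition \ref{spec} the spectrum $\sigma(\lambda I + K)$ would contain a sequence $\{\mu_n\}$ lying outside $\mathbb{D}^-$ accumulating at a point of $\partial\mathbb{D}$. But $\sigma(\lambda I + K) = \lambda + \sigma(K)$, and $\sigma(K)$ is either finite or a sequence converging to $0$; in particular $\sigma(\lambda I + K)$ has at most one accumulation point, namely $\lambda$. Hence such a sequence $\{\mu_n\}$ can exist only if $\lambda$ itself lies on $\partial\mathbb{D}$ (so that $\mu_n \to \lambda$) and the $\mu_n$ are distinct eigenvalues of $\lambda I + K$ outside $\mathbb{D}^-$ accumulating at $\lambda$. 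So the whole problem collapses to: $|\lambda| = 1$, and we must derive a contradiction.

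In that remaining case I would split $H$ by the Riesz decomposition associated with the part of the spectrum inside $\mathbb{D}^-$ versus outside. Write $\sigma(\lambda I + K) \cap (\mathbb{D}^-)^c = \{\mu_n\}_{n=1}^\infty$ (with $\mu_n \to \lambda$, $|\mu_n| > 1$): each $\mu_n$ is an isolated point of the spectrum and an eigenvalue of finite multiplicity of the compact-type operator $\lambda I + K$, with finite-dimensional Riesz spectral subspace $E_n$. The difficulty is that there are now infinitely many such points, so unlike the compact case we cannot put the ``outside'' part into a single finite Jordan model. Instead I would argue directly against the norm-unimodality condition. Fix the norm-unimodal constant $r$ and choose $N$ so large that $|\mu_n| < r^{1/2}$ for all $n \geq N$ — possible since $\mu_n \to \lambda$ with $|\lambda| = 1 < r$. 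Let $H_{\mathrm{out}} = \overline{\operatorname{span}}\bigcup_{n < N} E_n$ (finite-dimensional) and $H_{\mathrm{in}}$ its complement in the Riesz decomposition; then $\lambda I + K = S_{\mathrm{out}} \oplus S_{\mathrm{in}}$ (up to similarity, as in the compact-operator proof, turning $S_{\mathrm{in}}$ into an upper-triangular piece and $S_{\mathrm{out}}$ a finite Jordan block), where $\sigma(S_{\mathrm{out}})$ is a finite set of moduli $> 1$ and $\sigma(S_{\mathrm{in}}) \subseteq \{\, |z| \le r^{1/2}\,\}$, hence $r_1(S_{\mathrm{in}}) \le r^{1/2} < r$.

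Now take the vectors $x_m$ from the definition of norm-unimodality and decompose $x_m = x_m^{\mathrm{in}} + x_m^{\mathrm{out}}$. Since $\lim_k \|(\lambda I + K)^k x_m\| = 0$ and $S_{\mathrm{out}}$ is a finite Jordan block with all eigenvalues of modulus $> 1$, the argument of Proposition \ref{spec} (the spectral-radius/inverse estimate, using that $S_{\mathrm{out}}$ is invertible with $r(S_{\mathrm{out}}^{-1}) < 1$) forces $x_m^{\mathrm{out}} = 0$, so the orbit of $x_m$ lives entirely in $H_{\mathrm{in}}$ and $\|(\lambda I + K)^i x_m\| = \|S_{\mathrm{in}}^i x_m^{\mathrm{in}}\| \ge r^i \|x_m^{\mathrm{in}}\|$ for $i = 1, \dots, m$. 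Letting $m \to \infty$ this yields $r(S_{\mathrm{in}}) = \lim_i \|S_{\mathrm{in}}^i\|^{1/i} \ge r$, contradicting $r_1(S_{\mathrm{in}}) \le r^{1/2} < r$. The main obstacle is the bookkeeping in the first paragraph: one must be careful that Proposition \ref{spec} really does apply (it requires a Hilbert space and the points to be genuinely outside $\mathbb{D}^-$ accumulating on $\partial\mathbb{D}$), and one must verify that the Riesz projection splitting off the finitely many ``large'' eigenvalues behaves the way the compact-operator proof's splitting did — but since we only ever peel off a \emph{finite} part whose complement has spectral radius strictly below $r$, the same similarity-to-triangular-form trick and the same inverse estimate go through verbatim.
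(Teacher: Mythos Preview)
Your argument is correct and shares the paper's overall architecture: both invoke Proposition~\ref{spec} to reduce to $|\lambda|=1$, then use a Riesz splitting to separate the ``outside'' eigenvalues from the rest, kill the outside component of each $x_m$ via the condition $\lim_k\|(\lambda I+K)^kx_m\|=0$, and obtain a spectral-radius contradiction on the remaining piece.

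The genuine difference is in \emph{how much} spectrum you peel off. The paper strips away \emph{all} of the (possibly infinitely many) eigenvalues $\{\mu_n\}_{n=1}^{M}$ lying outside $\mathbb{D}^{-}$, one at a time, producing an infinite lower-triangular block form whose last diagonal entry $T_\infty$ satisfies $\sigma(T_\infty)\subseteq\mathbb{D}^{-}$; it then shows inductively that every finite coordinate $x_n$ vanishes and derives $r(T_\infty)\ge r>1$. You instead fix the norm-unimodal constant $r$ at the outset and peel off only the \emph{finitely} many $\mu_n$ with $|\mu_n|\ge r^{1/2}$ via a single Riesz projection, leaving a remainder $S_{\mathrm{in}}$ with $r(S_{\mathrm{in}})\le r^{1/2}<r$. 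This trades the sharper bound $\sigma(T_\infty)\subseteq\mathbb{D}^{-}$ for a much simpler decomposition: one bounded Riesz projection onto a finite-dimensional subspace (plus similarity invariance from Proposition~2.1) replaces the paper's infinite triangularisation and the attendant claim, left as ``not difficult to check'', that $\sigma(T_\infty)\subseteq\sigma(\lambda I+K)\setminus\{\mu_n\}$. Either route closes the argument; yours is the more economical one, at the cost of having the splitting depend on $r$.
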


\begin{proof}
If $|\lambda|\neq 1$, then the result is followed by proposition
\ref{spec}. In fact, $\lambda I+K$ is not Li-Yorke chaotic
currently.

Now let $|\lambda|=1$. Let $\{\mu_{n}\}_{n=1}^{M}$, $1\leq M\leq
\infty$, be the spectra of $\sigma(\lambda I+K)\cap
{\mathbb{D}^{-}}^{c}$. Since $K$ is a compact operator, then
according to Riesz Decomposition Theorem
$$\lambda I+K=\begin{matrix}\begin{bmatrix}
T_{1}\\
*&T_{2}\\
*&*&T_{3}\\
\vdots&\vdots&\vdots&\ddots \\
*&*&*&\ldots&T_{\infty}
\end{bmatrix}&
\begin{matrix}
H_1\\
 H_2\\
  H_3\\
  \vdots\\
   H_{\infty}\end{matrix}\end{matrix}$$

where $H_1 , H_2 , \ldots , H_n , \ldots$ are such subspaces that
$\bigoplus_{j=1}^{n}H_j$ coincides with the Riesz subspace of
$(\lambda I+K)^{*}$ corresponding to the clopen subset
$\{\overline{\mu_1} , \overline{\mu_2} , \ldots ,
\overline{\mu_n}\}$ of $\sigma((\lambda I+K)^*)$,
$\sigma(T_n)=\{\mu_n \}$  and $H_\infty=H\ominus \{\bigoplus_{1\leq
n\leq M}H_n \}$. It is not difficult to check that
$\sigma(T_\infty)\subseteq \sigma(\lambda I+K)-\{\mu_n\}_{n=1}^M$.

Suppose $\lambda I+K$ is norm-unimodal, i.e., there is a constant $r
>1$ such that for any $m\in\mathbb{N}$, there exists $x\in
H$ satisfying
$$ \lim\limits_{k\rightarrow\infty}\|(\lambda I+K)^kx\|=0, \ \ \  and
 \ \ \ \| (\lambda I+K)^ix\|\geq r^i\|x\|, \ \
 i=1,2,\ldots,m.$$
For such $x\in H$, write $x=(\bigoplus_{n=1}^{M}x_n)\oplus
x_\infty$. Then $\lim\limits_{k\rightarrow\infty}\|(\lambda
I+K)^kx\|=0$ implies $\bigoplus_{n=1}^{M}x_n=0$; and $||T_\infty^i
x||=||(\lambda I+K)^ix||\geq r^i ||x|| , 1\leq i \leq m$, implies
$r(T_\infty)\geq r$. However, this is impossible since
$\sigma(T_\infty)\subseteq \mathbb{D}^{-}$.
\end{proof}

In the research of hypercyclicity, Herrero and Wang \cite{Her} gave
a surprising result.

\begin{proposition}[\cite{Her}]
For any $\epsilon>0$, there is a small compact operator
$\|K_{\epsilon}\|<\epsilon$ such that $I+K_{\epsilon}$ is
hypercyclic.
\end{proposition}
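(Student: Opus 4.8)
The plan is to work on $H=\ell^{2}(\mathbb{N})$ (every separable infinite-dimensional complex Hilbert space being unitarily equivalent to it) and to manufacture a single compact operator $K_{\epsilon}$ with $\|K_{\epsilon}\|<\epsilon$ for which $I+K_{\epsilon}$ meets one of the standard sufficient conditions for hypercyclicity --- either the Godefroy--Shapiro eigenvalue criterion or the subsequence form of the Hypercyclicity Criterion (Kitai, Gethner--Shapiro, B\`es--Peris). Since we are free to choose $K_{\epsilon}$ among \emph{all} compact operators of norm $<\epsilon$, the entire content of the statement is the construction; the verification of hypercyclicity is then an application of a known criterion, and compactness together with $\|K_{\epsilon}\|<\epsilon$ will be read off from the defining data.

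First I would pursue the Godefroy--Shapiro route. Recall that $S$ is hypercyclic once the linear span of its eigenvectors with eigenvalues in $\mathbb{D}$ is dense and the span of those with eigenvalues of modulus $>1$ is dense. Writing $S=I+K_{\epsilon}$, an $S$-eigenvalue $\mu$ corresponds to a $K_{\epsilon}$-eigenvalue $\mu-1$; as $K_{\epsilon}$ is compact its nonzero eigenvalues form a sequence $\lambda_{n}\to 0$, so the eigenvalues of $S$ accumulate only at $1$. Thus I would look for a compact $K_{\epsilon}$ whose eigenvalue sequence splits into two infinite subfamilies, one lying in the disk $\{|z+1|<1\}$ and one outside it (for instance $\{-t_{n}\}$ and $\{t_{n}\}$ with $t_{n}\downarrow 0$, giving $S$-eigenvalues in $(0,1)$ and in $(1,2)$ respectively), with the two corresponding systems of eigenvectors each complete in $H$; because both subfamilies cluster only at $0$, choosing them concentrated near $0$ makes $\|K_{\epsilon}\|=\sup_{n}|\lambda_{n}|$ as small as desired. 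The \emph{hard part} is to produce one compact operator carrying two disjoint, separately completely-spanning eigensystems: the obvious candidates fail, since a compact weighted backward- or forward-shift perturbation $I+K$ has only the trivial eigenvalue $1$ and orbits of finitely supported vectors escaping to infinity, while a block-diagonal operator with finite-dimensional blocks is never hypercyclic (its orbit, projected to the first $N$ blocks, would have to be dense in a finite-dimensional space). This explicit, delicate construction is exactly the content of \cite{Her}.

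If one would rather not chase eigenvectors, the alternative is the subsequence Hypercyclicity Criterion: exhibit dense sets $X_{0},Y_{0}$, an increasing sequence $(n_{k})$ and maps $R_{k}\colon Y_{0}\to H$ with $(I+K_{\epsilon})^{n_{k}}\to 0$ on $X_{0}$, $R_{k}\to 0$ on $Y_{0}$, and $(I+K_{\epsilon})^{n_{k}}R_{k}\to\mathrm{id}$ on $Y_{0}$. Here $(I+K_{\epsilon})^{n}=\sum_{j}\binom{n}{j}K_{\epsilon}^{j}$, and the binomial coefficients grow only polynomially in $n$, so morally one wants the iterated weights of $K_{\epsilon}$ to decay fast enough, and the times $n_{k}$ to be chosen sparsely and adapted block-by-block to this decay, to kill that growth; but $X_{0}$ cannot be taken to be the finitely supported vectors (their orbits escape to infinity), so the real work is to locate the correct dense set of rapidly decaying vectors and the matching right inverses. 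In either approach the obstacle is the same --- the smallness and compactness of $K_{\epsilon}$ pull against the spreading of orbits that hypercyclicity demands --- and reconciling them is precisely what the construction in \cite{Her} achieves; once $K_{\epsilon}$ is in hand, compactness follows as the norm limit of its finite-rank truncations (because $\lambda_{n}\to 0$) and $\|K_{\epsilon}\|<\epsilon$ is immediate from the choice of parameters.
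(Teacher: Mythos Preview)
The paper gives no proof of this proposition: it is quoted verbatim as a result of Herrero and Wang \cite{Her} and serves only to motivate the next proposition on distributional chaos. Your proposal is, in substance, the same non-proof: after surveying two possible strategies (Godefroy--Shapiro eigenvectors, the subsequence Hypercyclicity Criterion) and correctly noting why the obvious candidates---compact weighted shifts, block-diagonal operators---do not work, you explicitly defer the actual construction to \cite{Her}. So there is nothing to compare; both the paper and your write-up point to the original source.

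If your intention was to supply a genuine proof where the paper has none, be aware that what you have written is a plan, not an argument: neither route is carried through, and in each case you stop precisely at the point where the real difficulty lies (producing a single compact $K_\epsilon$ with the required eigenvector structure, or identifying the dense sets $X_0,Y_0$ and the times $n_k$). That is a gap you yourself acknowledge.
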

Correspondingly, we obtain a similar result for distributional
chaos. Although $I+K$ can't be norm-unimodal, it may hold Weakly
Distributionally Chaotic Criterion.

\begin{proposition}
For any $\epsilon>0$, there is a small compact operator
$\|K_{\epsilon}\|<\epsilon$ such that $I+K_{\epsilon}$ is
distributionally chaotic.
\end{proposition}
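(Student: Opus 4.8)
The aim is to imitate, for distributional chaos, the Herrero--Wang phenomenon recorded in Proposition \ref{spec}'s neighbourhood, but to shape the perturbation so that the \emph{Weakly} Distributionally Chaotic Criterion (Theorem \ref{W-D-C-C}) applies; recall that by the previous proposition $I+K_\epsilon$ can never be norm-unimodal, so Theorem \ref{D-C-C} is unavailable. I would build $K_\epsilon$ as a block-diagonal operator. Write $H=\bigoplus_{k\ge 1}H_k$ with $\dim H_k=n_k\to\infty$, fix an orthonormal basis $e^{(k)}_1,\dots,e^{(k)}_{n_k}$ of $H_k$, and declare $(I+K_\epsilon)|_{H_k}=\gamma_k I_{n_k}+\delta_k S_k$, where $S_k$ is the nilpotent weighted shift $S_ke^{(k)}_j=e^{(k)}_{j-1}$ for $j\ge 2$ and $S_ke^{(k)}_1=0$. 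Equivalently $K_\epsilon|_{H_k}=(\gamma_k-1)I_{n_k}+\delta_k S_k$, so $\|K_\epsilon|_{H_k}\|\le(1-\gamma_k)+\delta_k$. I would then choose $0<\gamma_k<1$ and $\delta_k>0$ with $(1-\gamma_k)+\delta_k<\epsilon$ for every $k$, with $\delta_k\to 0$, and --- this is the decisive condition --- with $\delta_k/(1-\gamma_k)\to\infty$; a concrete choice is $\delta_k=\tfrac{\epsilon}{3}2^{-k/2}$ and $1-\gamma_k=\delta_k^{\,2}$. With this, $K_\epsilon$ is a norm limit of finite-rank operators, hence compact, and $\|K_\epsilon\|<\epsilon$.

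Next I would test the hypotheses of Theorem \ref{W-D-C-C} on the top vectors $x_k:=e^{(k)}_{n_k}$, which stay inside the invariant block $H_k$, so that $(I+K_\epsilon)^ix_k=\sum_{j=0}^{\min(i,\,n_k-1)}\binom{i}{j}\gamma_k^{\,i-j}\delta_k^{\,j}\,e^{(k)}_{n_k-j}$ and hence $\|(I+K_\epsilon)^ix_k\|^2=\sum_{j=0}^{\min(i,\,n_k-1)}\binom{i}{j}^2\gamma_k^{\,2(i-j)}\delta_k^{\,2j}$. For each fixed $k$ this is a polynomial in $i$ of degree $2(n_k-1)$ multiplied by $\gamma_k^{\,2i}$, hence tends to $0$ as $i\to\infty$: this is $(WNU1)$. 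For $(WNU2)$ I would retain only the top term $j=n_k-1$ (present once $i\ge n_k-1$), obtaining $\|(I+K_\epsilon)^ix_k\|\ge\big(\tfrac{i\delta_k}{n_k-1}\big)^{\,n_k-1}\gamma_k^{\,i-n_k+1}$, and substitute $i=\lfloor n_kt\rfloor$. Provided $t\delta_k\ge 1$, the right-hand side is at least $e^{\,n_kh_k(t)}$, where $h_k(t)=\tfrac12\log(t\delta_k)+t\log\gamma_k$ is concave, with maximum $\asymp\log\!\big(\delta_k/(1-\gamma_k)\big)\to\infty$ attained at $t^{*}\asymp 1/(1-\gamma_k)$.

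The core of the argument is then a counting estimate on the plateau of $h_k$. Put $c_k:=\tfrac12\max_t h_k(t)$, so $c_k\to\infty$, and let $[t_k^-,t_k^+]=\{t:h_k(t)\ge c_k\}$. Because the maximum of $h_k$ is large and attained at the far abscissa $t^{*}\asymp 1/(1-\gamma_k)$, one checks that $t_k^-/t_k^+\to 0$ (for the sample parameters, $t_k^-\asymp 2^{3k/4}$ and $t_k^+\asymp k\,2^{k}$, while $1/\delta_k\asymp 2^{k/2}$, so also $t_k^->1/\delta_k$, which keeps $i=\lfloor n_kt\rfloor\ge n_k-1$ and ensures $t\delta_k\ge 1$ on $[t_k^-,t_k^+]$). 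Taking $N_k:=\lceil n_kt_k^+\rceil$ and $C_k:=e^{\,n_kc_k}$, one gets $\|(I+K_\epsilon)^ix_k\|\ge C_k=C_k\|x_k\|$ for every $i\in[\,\lceil n_kt_k^-\rceil,\,n_kt_k^+\,]$, so the proportion of $i\in\{0,\dots,N_k-1\}$ with this property is at least $1-t_k^-/t_k^+-o(1)\to 1$. Passing to a subsequence of blocks on which $C_k$ and $N_k$ strictly increase to $+\infty$, the triple $\big((x_k),(C_k),(N_k)\big)$ satisfies $(WNU1)$ and $(WNU2)$, and Theorem \ref{W-D-C-C} yields that $I+K_\epsilon$ is distributionally chaotic.

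I expect the genuinely delicate step to be this last balancing act. The profile $i\mapsto\|(I+K_\epsilon)^ix_k\|$ must be simultaneously (i) \emph{large}, tending to $\infty$, on its plateau --- which needs the peak height $\asymp\log(\delta_k/(1-\gamma_k))$ to grow, hence forces $\delta_k/(1-\gamma_k)\to\infty$ --- and (ii) have that plateau fill asymptotically all of $\{0,\dots,N_k-1\}$ --- which forces the peak abscissa $t^{*}\asymp 1/(1-\gamma_k)$ to be so far to the right that the amplified interval dominates $[0,t_k^+]$. Reconciling (i) and (ii) while keeping $\|K_\epsilon\|<\epsilon$ and $K_\epsilon$ compact is the heart of the matter; the remaining items --- compactness and the norm bound for $K_\epsilon$, and $(WNU1)$ --- are routine.
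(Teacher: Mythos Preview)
Your approach is correct and shares the paper's overall framework---a block-diagonal compact perturbation with each block of the form $\gamma_k I_{n_k}+\delta_k S_k$, followed by verification of Theorem \ref{W-D-C-C}---but your execution diverges at the choice of test vector and, consequently, at the parameter regime and the analysis.

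The paper takes $x_i=(1,1,\ldots,1)\in H_i$ (with $\dim H_i=2m_i$) and the specific weights $\gamma_i=1-\epsilon_i$, $\delta_i=2\epsilon_i$. The point of this choice is that for $1\le n\le m_i$ the first $m_i$ coordinates of $(\gamma_i I+\delta_i S_i)^n x_i$ are each exactly $\sum_{k=0}^{n}\binom{n}{k}(1-\epsilon_i)^k(2\epsilon_i)^{n-k}=(1+\epsilon_i)^n$, so one reads off $\|(I_i+K_i)^n x_i\|\ge \tfrac{1}{\sqrt 2}(1+\epsilon_i)^n\|x_i\|$ directly from the binomial theorem. This makes $(WNU2)$ almost immediate: pick $L_i$ with $(1+\epsilon_i)^{L_i}\ge \sqrt{2}\,C_i$, then $m_i$ with $L_i/m_i<1/i$, and the density condition follows. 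In particular the ratio $\delta_i/(1-\gamma_i)=2$ stays bounded.

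You instead take $x_k=e^{(k)}_{n_k}$, which forces the harder single-term binomial estimate and the concave-profile plateau analysis; your condition $\delta_k/(1-\gamma_k)\to\infty$ is genuinely needed for your test vector, since with a bounded ratio the plateau $[t_k^-,t_k^+]$ would occupy only a fixed fraction of $[0,t_k^+]$ and $(WNU2)$ would fail. Your calculations (the lower bound $e^{n_kh_k(t)}$ with the $\tfrac12$ absorbing the $(n_k-1)$-versus-$n_k$ discrepancy, the asymptotics $t_k^-\asymp 2^{3k/4}$, $t_k^+\asymp k\,2^k$) are sound, so the argument goes through. What the paper's all-ones vector buys is a one-line growth estimate in place of your plateau bookkeeping; what your route buys is perhaps a clearer picture of \emph{why} the single-vector profile is unimodal and where the density in $(WNU2)$ actually comes from.
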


\begin{proof}
Without losses, assume $\mathcal {H}$ be a separable complex Hilbert
space. Given any $\epsilon>0$. Let $C_i$ be a sequence of positive
numbers increasing to $+\infty$. For each $i\in \mathbb{N}$, set
$\epsilon_i=4^{-i}\epsilon$. Then we can select $L_i$ such that
$(1+\epsilon_i)^{L_i}\geq\sqrt{2}C_i$. Moreover, choose $m_i$ such
that $\frac{L_i}{m_i}<\frac{1}{i}$.

Write $n_i=2m_i$. We can obtain a orthogonal decomposition of
Hilbert space $\mathcal {H}=\bigoplus_{i=1}^{\infty}H_i$, where
$H_i$ is $n_i$-dimensional subspace. Define operators on each $H_i$
as follows,
$$
S_{i}={\begin{matrix}\begin{bmatrix}
0&2\epsilon_{i}\\
&\ddots&\ddots\\
&&\ddots&2\epsilon_{i}\\
&&&0\\
\end{bmatrix}&
\begin{matrix}
\end{matrix}\end{matrix}}_{(n_{i}\times n_{i})}
 \ , \ \ \ \
K_{i}={\begin{matrix}\begin{bmatrix}
-\epsilon_{i}&2\epsilon_{i}\\
&\ddots&\ddots\\
&&\ddots&2\epsilon_{i}\\
&&&-\epsilon_{i}\\
\end{bmatrix}&
\begin{matrix}
\end{matrix}\end{matrix}}_{(n_{i}\times n_{i})}.
$$
Then
$$
I_{i}+K_{i}={\begin{matrix}\begin{bmatrix}
{1-\epsilon_{i}}&2\epsilon_{i}\\
&\ddots&\ddots\\
&&\ddots&2\epsilon_{i}\\
&&&{1-\epsilon_{i}}\\
\end{bmatrix}&
\begin{matrix}
\end{matrix}\end{matrix}}_{(n_{i}\times n_{i})}
= \ \ (1-\epsilon_{i})I_{i}+S_{i}.
$$
Let $x_i=(1,1,\ldots,1)\in {H}_i$. We have for $1\leq n \leq m_i$,
\begin{eqnarray*}
&& \|(I_{i}+K_{i})^{n}(x_i)\|
\\
&=& \|((1-\epsilon_{i})I_{i}+S_{i})^{n}(x_i)\|
\\
&=& \|(\sum\limits_{k=0}^{n}C_{n}^{k}(1-\epsilon_{i})^k
{S_i}^{n-k}){x_i}\| \\
&\geq&
\|(\underbrace{\sum\limits_{k=0}^{n}C_{n}^{k}(1-\epsilon_{i})^k
{(2\epsilon_{i})}^{n-k}, \ldots ,
\sum\limits_{k=0}^{n}C_{n}^{k}(1-\epsilon_{i})^k
{(2\epsilon_{i})}^{n-k}}_{m_i}, 0, \ldots, 0)\| \\
&=& \sqrt{m_i}(1+\epsilon_{i})^n \\
&=& \frac{(1+\epsilon_{i})^n }{\sqrt{2}}\|x_i\|.
\end{eqnarray*}
Consequently,
\begin{eqnarray*}
&&\frac{\# \{0\leq k\leq m_i-1; \| (I_{i}+K_{i})^{k}x_i \|\geq
C_i\|x_i\| \} }{m_i} \\
&\geq& \frac{\# \{L_{i}, L_i+1, \ldots, m_i-1 \} }{m_i} \\
&=& 1-\frac{L_i}{m_i}.
\end{eqnarray*}
Notice $K_i$ is of finite rank and $\|K_i\|\leq 4^{1-i}\epsilon$.
Hence, $K_{\epsilon}=\bigoplus_{i=1}^{\infty}K_i$ is a compact
operator on $\mathcal {H}$ and $\|K_{\epsilon}\|<\epsilon$. Since
$I+K_{\epsilon}=\bigoplus_{i=1}^{\infty}(I_i+K_i)$, then for
previous $x_i$ seemed as in $\mathcal {H}$,

$(WNU1)$ \ \
$\lim\limits_{k\rightarrow\infty}\|{(I+K_{\epsilon})}^kx_i\|=0$
since $r(I+K_{\epsilon})<1$.

$(WNU2)$  \ \ The sequence of positive integers $m_i$ increasing to
$+\infty$ satisfies
\begin{eqnarray*}
&&\lim\limits_{i\rightarrow\infty}\frac{\# \{0\leq k\leq m_i-1; \|
(I+K_{\epsilon})^{k}x_i \|\geq C_i\|x_i\| \} }{m_i}
\\
&=&\lim\limits_{i\rightarrow\infty}\frac{\# \{0\leq k\leq m_i-1; \|
(I_{i}+K_{i})^{k}x_i \|\geq C_i\|x_i\| \} }{m_i} \\
&=&\lim\limits_{i\rightarrow\infty}1-\frac{L_i}{m_i}=1.
\end{eqnarray*}
Therefore, $I+K_{\epsilon}$ is distributionally chaotic  by theorem
\ref{W-D-C-C}.
\end{proof}
\begin{remark}
From the construction above, we can see that distributional chaos
isn't preserved under compact perturbations for bounded linear
operators. The previous operator $I+K_{\epsilon}$ is a
counterexample. In fact, $(I+K_{\epsilon})-\widetilde{K_i}$ is not
distributionally chaotic, where
$\widetilde{K_i}=(\bigoplus_{j=1}^{i}0_{H_j})\bigoplus(\bigoplus_{j=i+1}^{\infty}K_j)$
is a compact operator with norm less than $4^{-i}\epsilon$.
\end{remark}

\end{document}